\title{Hypergraph Saturation Irregularities}
\author{Natalie C. Behague, Queen Mary University of London}
\theoremstyle{plain}
\newtheorem{theorem}{Theorem}
\newtheorem{conjecture}[theorem]{Conjecture}
\newtheorem{question}[theorem]{Question}
\theoremstyle{plain}
\newtheorem*{theorem*}{Theorem}
\theoremstyle{remark}
\newtheorem{claim}{Claim}[theorem]
\newcommand{\family}{\mathcal{F}} 
\newcommand{\sat}{\operatorname{sat}(\mathcal{F},n)} 
\newcommand{\repeatthm}[2]{
\theoremstyle{plain}
\newtheorem*{#1}{Theorem \ref{thm:#1}}
\begin{#1}
#2
\end{#1}
}
\DeclareMathOperator{\edge}{e}
\begin{document}
\maketitle
\begin{abstract}
Let $\mathcal{F}$ be a family of $r$-graphs. An $r$-graph $G$ is called $\mathcal{F}$-saturated if it does not contain any members of $\mathcal{F}$ but adding any edge creates a copy of some $r$-graph in $\mathcal{F}$. The saturation number $\operatorname{sat}(\mathcal{F},n)$ is the minimum number of edges in an $\mathcal{F}$-saturated graph on $n$ vertices. 
We prove that there exists a finite family $\mathcal{F}$ such that $\operatorname{sat}(\mathcal{F},n) / n^{r-1}$  does not tend to a limit. This settles a question of Pikhurko.
\end{abstract}

\section{Introduction}

An \emph{$r$-graph} is a pair, $(V(H),E(H))$, of \emph{vertices} and \emph{edges} where the edge set $E(H)$ is a collection of $r$-element subsets of the vertex set $V(H )$. We will let $|H| = |V(H)|$ and $e(H) = |E(H)|$.
When the context is clear we will refer to $r$-graphs simply as graphs.

Fix an $r$-graph $F$, called a forbidden graph. An $r$-graph $H$ is said to be  \emph{$F$-saturated} if $H$ does not contain $F$ as a subgraph, but adding any extra edge to $H$ creates a copy $F$ as a subgraph of $H$. We define the saturation number
\[\operatorname{sat}(F,n) = min\{e(H): |H| = n \text{ and $H$ is $F$-saturated}\}.\]

For $2$-graphs, (i.e. graphs), K\'aszonyi and Tuza \cite{KT86} proved that ${\operatorname{sat}(F,n) = O(n)}$. As a result, Tuza \cite{Tuz88} conjectured the following:

\begin{conjecture}[Tuza] For every $2$-graph $F$ the limit $\lim_{n\rightarrow \infty}\frac{\operatorname{sat}(F,n)}{n}$ exists.
\label{conjecture}
\end{conjecture}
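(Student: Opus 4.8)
Since changing a sequence by $o(n)$ does not affect whether its $n$-th term divided by $n$ converges, it suffices to establish either of two structural targets, which I would pursue in parallel. Throughout I focus on connected $F$; the disconnected case needs further (known) reductions.

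\emph{Target A: near-subadditivity.} It is enough to produce a constant $g=g(F)$ with
\[
\operatorname{sat}(F,m+n)\ \le\ \operatorname{sat}(F,m)+\operatorname{sat}(F,n)+g
\]
for all large $m,n$: then $b_n:=\operatorname{sat}(F,n)+g$ is subadditive, so Fekete's lemma shows $b_n/n$ converges, and hence so does $\operatorname{sat}(F,n)/n$. To prove the inequality, take minimum $F$-saturated graphs $H_1,H_2$ on disjoint vertex sets of sizes $m$ and $n$. Their disjoint union is $F$-free because $F$ is connected, and any non-edge inside $H_1$ (or inside $H_2$) remains activated, since we have only added edges to $H_i$ and so cannot have destroyed the copy of $F$ created when that non-edge was added inside $H_i$ alone. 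The only difficulty is the cross non-edges. The plan is to add a \emph{bounded} set $S$ of edges, $|S|\le g(F)$, so that (i) the disjoint union together with $S$ stays $F$-free, and (ii) adding any cross non-edge now creates a copy of $F$. When $F$ is, say, $2$-edge-connected one already gets a foothold on (ii): a single cross edge is a bridge of the disjoint union, whereas $F$ minus any edge is connected, so no copy of $F$ can use just one cross edge. The real content is to design $S$ so that \emph{every} cross pair is activated --- morally, so that $S$ supplies a small ``hub'' through which each cross pair can complete a copy of $F$ minus an edge, exploiting the structural regularity that (near-)minimality forces on $H_1$ and $H_2$.

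\emph{Target B: linear main term with small error.} Alternatively, prove $\operatorname{sat}(F,n)=a(F)\,n+o(n)$ for an explicit constant $a(F)$. The matching upper bound $\operatorname{sat}(F,n)\le u(F)\,n+O(1)$, with $u(F)$ the K\'aszonyi--Tuza coefficient \cite{KT86}, is already known, and one expects $a(F)=u(F)$ (though even this is open); the task would then be the lower bound $\operatorname{sat}(F,n)\ge u(F)\,n-o(n)$. For this I would look for a stability theorem: every $F$-saturated graph on $n$ vertices with at most $(u(F)+\varepsilon)n$ edges is, after deleting $o(n)$ vertices, a disjoint union of bounded-size ``gadgets'' drawn from a finite list $\mathcal{L}(F)$. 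An integer-program or flow formulation of ``which multiset of gadgets is simultaneously $F$-free, $F$-saturated, and edge-minimal'' would then show the optimum is eventually linear in $n$, which is more than enough.

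\emph{The main obstacle.} In both targets the crux is the same, and it is exactly what keeps the conjecture open: one must control the global structure of \emph{minimum} $F$-saturated graphs, about which very little is known in general --- $\operatorname{sat}(F,\cdot)$ is not even known to be monotone, and its exact value is open for most $F$. Moreover, any valid argument must genuinely use that $F$ is a single graph: as this paper shows, the family version can fail --- the ratio need not converge --- so the plug $S$ of Target A, equivalently the finite gadget list of Target B, must exist for one forbidden $F$ while provably breaking down for families. Identifying the feature of a single $F$ that rescues near-subadditivity is where I expect the difficulty to concentrate.
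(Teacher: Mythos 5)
This statement is Tuza's conjecture, which the paper states as an open problem and does not prove; indeed the whole point of the paper is to show that the natural generalisation to finite \emph{families} of $r$-graphs is false, while the single-graph case remains open. So there is no ``paper's proof'' to compare against, and your proposal is not a proof either: it is a research programme in which the entire mathematical content is deferred. In Target A, everything hinges on constructing the bounded plug $S$ that activates every cross non-edge while preserving $F$-freeness, and you give no construction, no candidate for $S$ even for a single nontrivial $F$ (say $F=K_4$ or a path), and no argument that such an $S$ exists. Note also that even granting near-subadditivity you would need to be slightly careful with Fekete: $\operatorname{sat}(F,\cdot)$ is not known to be monotone, and the inequality you posit only ``for all large $m,n$'' requires the standard but not entirely cost-free patching of small residues. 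In Target B, the ``stability theorem'' decomposing cheap saturated graphs into bounded gadgets is itself a statement at least as strong as the conjecture and is completely unsupported.

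The deeper issue, which you correctly identify but do not resolve, is that any such argument must exploit that $F$ is a single graph. The constructions in this paper (Theorems \ref{thm:big family}--\ref{thm:evens and odds}) show exactly how a finite family can force the cheap saturated graphs to exist only on an arithmetic progression of values of $n$, destroying any hope of subadditivity with bounded error. Your proposal never isolates the property of a single forbidden graph that would block this mechanism, so both targets remain conjectures rather than reductions. As written, the proposal contains no complete step beyond the (correct but standard) observation that near-subadditivity plus Fekete would suffice; the gap is the entire proof.
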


We can generalise the notion of saturation to families of graphs. For a family $\family$ of $r$-graphs  (called a forbidden family), an $r$-graph $H$ is called \emph{$\family$-saturated} if it does not contain any graph in $\family$ as a subgraph, but adding any edge creates a copy of some graph $F \in \family$ as a subgraph of $H$. We define the saturation number in the same way as before:
\[\operatorname{sat}(\family,n) = min\{e(H): |H| = n \text{ and $H$ is $\family$-saturated}\}.\]

For a family $\family$ of $2$-graphs we have ${\sat = O(n)}$ \cite{KT86}, just as we did for single $2$-graphs. However, the generalisation of Tuza's conjecture to finite families of graphs is not true: an example of a finite family  $\family$ where $\sat/n$ does not tend to a limit was given by Pikhurko \cite{Pik04}.
In this example, the graphs in $\family$ depend on some fixed constant $k$. For $n$ divisible by $k$, one can construct an $\family$-saturated graph on $n$ vertices that uses relatively few edges. For $n$ not divisible by $k$, there is no such `nice' construction and an $\family$-saturated graph on $n$ vertices is forced to contain many extra edges. 

For a family $\family$ of $r$-graphs, it was shown by Pikhurko \cite{Pik99} that  $\sat = O\left(n^{r-1}\right)$ when the family contains only a finite number of graphs.
This leads to the following generalisation of Tuza's conjecture to $r$-graphs, first posed by Pikhurko \cite{Pik99}.

\begin{conjecture} For every r-graph $F$ the limit $\lim_{n\rightarrow \infty}\frac{\operatorname{Sat}(F,n)}{n^{r-1}}$ exists.
\label{gen_conjecture}
\end{conjecture}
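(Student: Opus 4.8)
The plan is to show that $a_n := \operatorname{sat}(F,n)/n^{r-1}$, which is bounded by Pikhurko's $O(n^{r-1})$ bound, converges. The naive Fekete-type route — glue near-optimal $F$-saturated graphs and deduce an approximate subadditivity — fails outright for $r \ge 3$: a disjoint union of two $F$-saturated graphs on $n$ vertices has $\sim 2\operatorname{sat}(F,n)$ edges on $2n$ vertices, i.e.\ density (normalised by $n^{r-1}$) \emph{smaller} by a factor $2^{r-2}$, and the $\Theta(n^r)$ cross non-edges meeting both parts cannot in general be made $F$-creating by adding only $o(n^{r-1})$ edges. So the union need not be $F$-saturated, and any valid proof must control the shape of the extremal constructions.

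The approach I would take is therefore structural. Step A: strengthen Pikhurko's $O(n^{r-1})$ result to a \emph{canonical-form} statement — for each $n$ there is a near-optimal $F$-saturated graph $H_n$ determined by a bounded-size \emph{core} together with, for each vertex outside the core, a \emph{type} drawn from a finite list depending only on $F$ (so that every edge of $H_n$ meets the core, and $H_n$ is specified by the core together with the vector of type-multiplicities). Step B, the crux: prove a stability statement — every $F$-saturated graph with at most $\operatorname{sat}(F,n) + o(n^{r-1})$ edges has this canonical form after deleting $o(n)$ vertices, with the core ranging over a finite set depending only on $F$. Step C: bookkeeping. Given the canonical form, $\operatorname{sat}(F,n)$ equals, up to an $O(n^{r-2})$ error, the minimum over the finitely many admissible cores (and feasible type-multiplicity vectors) of an explicit polynomial of degree $r-1$ in $n$; the minimum of finitely many degree-$(r-1)$ polynomials, divided by $n^{r-1}$, has a convergent leading coefficient, so $a_n$ converges.

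In Step C one must also rule out that the minimising profile genuinely alternates with $n \bmod k$ for some $k = k(F)$ in a way that changes the leading coefficient periodically — this is exactly the mechanism by which $\operatorname{sat}(\family,n)/n^{r-1}$ can fail to converge for a finite \emph{family}, as in Pikhurko's example and in the main theorem of this paper, so the argument must use that $F$ is a single graph in an essential way (for instance, to show that a core/type-profile optimal for one residue of $n$ can be perturbed by $o(n)$ vertices into one admissible and optimal for any other residue, flattening any periodicity in the leading term).

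The main obstacle — and the reason this is still a conjecture — is Step B: no structure theorem for near-optimal $F$-saturated graphs is known, even for $2$-graphs, where it is essentially equivalent to Tuza's conjecture. Lacking it, I would first prove the conjecture for classes of $F$ where the canonical form is transparent — $F$ with a dominating vertex, $r$-partite $F$, or $F$ whose extremal saturation number has already been pinned down — verifying in each case that the types can be enumerated and that Step C's anti-periodicity check goes through, and only then attempt to isolate the general structural feature that forces it.
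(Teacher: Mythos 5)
The statement you were asked about is Conjecture~\ref{gen_conjecture}, which is an \emph{open conjecture}: the paper does not prove it, and indeed its main theorems show that the analogous statement fails for finite \emph{families} of $r$-graphs, leaving the single-graph case explicitly open (see Section~\ref{sec:problems}). So there is no proof in the paper to compare yours against, and your submission is, by its own admission, not a proof either. You have written a strategy in which every substantive step is unestablished: Step~A (a canonical bounded-core-plus-types form for some near-optimal $F$-saturated graph) is not known for $r$-graphs — Pikhurko's $O(n^{r-1})$ bound gives an upper-bound construction, not a structure theorem for minimisers; Step~B (stability: every near-minimal $F$-saturated graph has this form) is the crux and, as you say yourself, is open even for $r=2$, where it would essentially amount to Tuza's conjecture; and Step~C's ``anti-periodicity'' argument — that a single forbidden graph cannot force the leading coefficient to oscillate with $n \bmod k$ — is precisely the content of the conjecture and of the paper's Question~\ref{question:gap}, for which no mechanism is offered. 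A proof cannot consist of naming the lemmas one would need and observing that they are unknown.

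That said, your diagnosis of the landscape is accurate and consistent with the paper: the failure mode for families is exactly the periodic alternation of the optimal profile with $n \bmod k$ (this is how Theorems~\ref{thm:big family}--\ref{thm:evens and odds} are engineered), and any proof of the single-graph conjecture must isolate what prevents a single $F$ from encoding such a divisibility constraint. Your observation that naive Fekete-type subadditivity fails for $r \ge 3$ because the normalisation $n^{r-1}$ is superlinear is also correct and worth keeping in mind. But as it stands the submission establishes nothing beyond a plan whose central step you concede is out of reach; it should be presented as a research programme, not a proof.
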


As in the $2$-graph case we can further generalise this conjecture by  replacing the single $r$-graph $F$ with a finite family of $r$-graphs $\family$. Our main aim in this paper is to prove that this generalised conjecture is not true -- that is, for all $r$ there exists a finite family of $r$-graphs $\family$ such that $\sat / n^{r-1}$ does not tend to a limit. This resolves a question of Pikhurko (problem 7 in \cite{Pik04}). 

\repeatthm{big family}{
For all $r\ge 2$ there exists a family $\family$ of $r$-graphs and a constant $k \in \mathbb{N}$ such that 
	$$ \sat =
		\begin{cases}
			O(n) & \text{if } k \mid n \\			
			\Omega\left(n^{r-1}\right) & \text{if } k \nmid n	
		\end{cases} 
	$$
In particular, for any ${l \in \{1,2,\ldots,r-1\}}$,  we have that	
$\frac{\sat}{n^l}$ does not converge.
}

We prove Theorem \ref{thm:big family} in Section \ref{sec:main thm}. As in Pikhurko's proof for the $2$-graph case, the idea of the proof will be to choose a constant $k$ and to define a forbidden family $\family$ such that when $k$ divides $n$ there is a `nice' construction of an $\family$-saturated graph with few edges; and when $k$ does not divide $n$, an $\family$-saturated graph requires comparatively many edges. 

Our proof of Theorem \ref{thm:big family} uses a family $\family$ which grows in size with $r$. In a variation of the theorem, proved in Section \ref{sec:constant size}, we show that we can reduce the size of the forbidden family to be independent of $r$.

\repeatthm{constant size family}{
For all $r \ge 3$ there exists a family $\family$ of four $r$-graphs such that $\frac{\sat}{n^{r-1}}$ does not converge.
}

In reducing the family to a constant size we lose the large gap between the asymptotics that we had in Theorem \ref{thm:big family}. In particular, for a choice of constant $k$, we still have that if $k \nmid n$ then $\sat = \Omega\left(n^{r-1}\right)$, but if $k | n$ we only have $\sat = O\left(n^{r-2}\right)$ (as opposed to the $O(n)$ we had before).

Consider, with respect to the family given in Theorem \ref{thm:big family}, the set of integers $n$ where $\sat$ is $O(n)$. This set has low density: specifically, density $1/k$ where $k$ grows with $r$. A second variation of the theorem gives a forbidden family such that the the set of integers $n$ where $\sat$ is $O(n)$ has density $1/2$. This is proved in Section \ref{sec:dense}.

\repeatthm{evens and odds}{
For all $r \ge 2$ there exists a family $\family$ of $r$-graphs such that 
	$$ \sat =
		\begin{cases}
			O(n) & \text{if $n$ is even}\\			
			\Omega\left(n^{r-1}\right) & \text{if $n$ is odd.} 
		\end{cases} 
	$$
}

We end the paper with some open problems in Section \ref{sec:problems}.

\section{A Proof of the Main Theorem}
\label{sec:main thm}

\begin{theorem}  \label{thm:big family}
For all $r\ge 2$ there exists a family $\family$ of $r$-graphs and a constant $k \in \mathbb{N}$ such that 
	$$ \sat =
		\begin{cases}
			O(n) & \text{if } k \mid n \\			
			\Omega\left(n^{r-1}\right) & \text{if } k \nmid n	
		\end{cases} 
	$$
In particular, for any ${l \in \{1,2,\ldots,r-1\}}$,  we have that	
$\frac{\sat}{n^l}$ does not converge.
\end{theorem}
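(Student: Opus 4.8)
The plan is to adapt Pikhurko's strategy for $2$-graphs: fix a large constant $k$ and build a finite family $\family$ with two competing features. When $k \mid n$ there should be a cheap ``tiling'' construction of an $\family$-saturated graph, while the members of $\family$ should make every $\family$-saturated graph rigid enough that when $k \nmid n$ no cheap construction exists and $\Omega(n^{r-1})$ edges are forced. Concretely, $\family$ will consist of three kinds of $r$-graph: a ``block-size'' graph such as $K_{k+1}^{(r)}$, forbidding overly large dense clusters; \emph{interaction} graphs --- for each composition $r = b_1 + \dots + b_m$ with $m \ge 2$, the graph $D_{(b_1,\dots,b_m)}$ formed from $m$ disjoint copies of a fixed block on $k$ vertices together with one extra edge meeting the $i$th copy in $b_i$ vertices --- which ensure that any edge spanning two blocks completes a forbidden copy; and \emph{density} gadgets, such as an $(r-1)$-set together with $k$ further vertices each completing it to an edge, which force every $(r-1)$-set of a saturated graph to have bounded codegree. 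The family is finite, but the interaction graphs alone number $2^{r-1}-1$, which is why $|\family|$ grows with $r$.

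For the upper bound when $k \mid n$, take $G_0$ to be $n/k$ vertex-disjoint copies of the block --- the natural first attempt being $K_k^{(r)}$ --- together with at most a sparse connecting structure, so that $e(G_0) = O(n)$. Every member of $\family$ is connected and has more than $k$ vertices, so it cannot fit inside a single block; hence $G_0$ is $\family$-free. Adding any non-edge to $G_0$ necessarily spans at least two blocks, and then completes a copy of the appropriate $D_{(b_1,\dots,b_m)}$; so $G_0$ is $\family$-saturated and $\sat = O(n)$.

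For the lower bound when $k \nmid n$, suppose for contradiction that some $\family$-saturated $H$ on $n$ vertices has $e(H) = o(n^{r-1})$. The density gadget bounds all codegrees, so no ``near-complete'' block can involve too many vertices. Combined with the standard saturation count --- each of the $\binom{n}{r} - o(n^{r-1}) = \Theta(n^r)$ non-edges $e$, when added, completes a copy of some member of $\family$ through $e$, hence is pinned down by $\Theta(1)$ edges of $H$ lying near it --- one deduces that $H$ decomposes into vertex-disjoint copies of the block plus a ``leftover'' part which contains no copy of the block and is itself $\family$-saturated on its own vertex set. The crux is then to force this leftover to be empty: since $k \nmid n$ the leftover has a number of vertices not divisible by $k$, so if it is nonempty it contains at least one vertex, and one argues that a small block-free piece always admits an addable edge --- joining it to a surrounding block, or an internal edge --- that completes no forbidden copy, contradicting saturation. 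Hence the leftover is empty, $n$ is a multiple of $k$, a contradiction; so in fact $e(H) = \Omega(n^{r-1})$.

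The main obstacle is designing $\family$ so that both ends are simultaneously met: the interaction graphs must be strong enough that spanning edges are always forbidden (so that $G_0$ is saturated) and that block-free leftovers of every possible ``wrong'' size fail to be saturated (so that the rigidity holds), yet weak enough that $G_0$ itself remains $\family$-free and --- most delicately --- that a block-free $\family$-saturated graph on $\Omega(n)$ vertices is genuinely forced to be dense rather than merely non-tileable. Making this ``no cheap leftover'' step go through uniformly in $r$ is where I expect the bulk of the work to be, and it is what drives the growth of $|\family|$ with $r$. Granting the two bounds, the ``In particular'' statement is immediate: for $l \le r-1$ we have $\sat/n^l = O(n^{1-l})$ along $k \mid n$, which is bounded and tends to $0$ once $l \ge 2$, whereas $\sat/n^l = \Omega(n^{r-1-l})$ along $k \nmid n$, which is bounded away from $0$. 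For $r \ge 3$ these two subsequences have genuinely different limiting behaviour, and the remaining case $r = 2$, $l = 1$ recovers Pikhurko's non-convergence result upon tracking the implied constants. In every case $\sat/n^l$ fails to converge.
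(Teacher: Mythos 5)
Your upper bound matches the paper's: disjoint copies of $K_k^{(r)}$ together with bridge/interaction graphs (the paper indexes them by partitions of $r$ rather than compositions, but this is cosmetic), giving $\sat \le \frac{n}{k}\binom{k}{r} = O(n)$ when $k \mid n$. The lower bound, however, has two genuine gaps. First, your family does not deliver the structural decomposition you rely on. The paper includes, for every $1 \le i \le k-1$, the graph $F_i$ consisting of two copies of $K_k^{(r)}$ meeting in exactly $i$ vertices; this is what forces the union $A$ of all blocks in a saturated graph to be a \emph{vertex-disjoint} union of blocks, which in turn is what makes $k \nmid n$ imply $B = V \setminus A \ne \emptyset$. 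Your substitutes do not do this job: $K_{k+1}^{(r)}$ only excludes two blocks meeting in $k-1$ vertices, your interaction graphs $D_{(b_1,\dots,b_m)}$ require \emph{disjoint} blocks and so say nothing about intersecting ones, and the codegree gadget only excludes intersections of size between $r-1$ and roughly $k-r+1$. For instance, a ``flower'' of many $K_k^{(r)}$'s sharing a single vertex contains no member of your family, covers $m(k-1)+1$ vertices, and destroys both the disjoint decomposition and the parity argument (``leftover empty $\Rightarrow k \mid n$'') on which your contradiction rests.

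Second, even granting the decomposition, your claim that a nonempty block-free leftover ``admits an addable edge that completes no forbidden copy, contradicting saturation'' is wrong in direction, and it conceals the actual content of the proof. A nonempty leftover $B$ does not contradict saturation; it \emph{forces many edges}. The paper's argument is: (i) for every $(r-1)$-set $X \subseteq A$ meeting at least two blocks and every $x \in B$, if $X \cup \{x\}$ is a non-edge then adding it must create a new $K_k^{(r)}$ whose remaining $k-r$ vertices lie in $B$ (they cannot lie in $A$, since $X \cup \{y\}$ with $y \in A$ would already be a forbidden bridge edge), so every such $X$ lies in an edge meeting $B$ --- that is $\binom{mk}{r-1} - m\binom{k}{r-1} = \Omega(n^{r-1})$ edges when $|A| \ge n/2$; and (ii) every $(r-1)$-set inside $B$ has codegree at least $k-r$, giving $\binom{|B|}{r-1}\frac{k-r}{r} = \Omega(n^{r-1})$ edges when $|B| \ge n/2$. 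Neither count appears in your sketch, and without (i) a leftover of bounded size (which your ``small piece'' argument cannot exclude) would be perfectly compatible with a sparse saturated graph. A further point you would need to address: since your codegree gadget is itself a member of $\family$, adding a non-edge may create \emph{it} rather than a block, so the dichotomy ``every added edge creates a new $K_k^{(r)}$'' --- the engine of both counts above --- needs a separate argument for your family. The paper avoids all of this by keeping every member of $\family$ of the form ``blocks plus at most one extra edge,'' so that any added edge touching $B$ must create a new block.
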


\begin{proof}

Fix any integer $k>r$ and take $\family$ to be the family of all of the following $r$-graphs:
\begin{enumerate}[a)]
\item For each $1 \le i \le k-1$, the graph $F_i$ consisting of two copies of $K_k^{(r)}$ intersecting in exactly $i$ vertices.
\item For each $(x_1,x_2,\ldots, x_t)$ with  $\Sigma x_i = r$ and ${1 \le x_1 \le x_2 \le \ldots \le x_t \le (r-1)}$, the graph $H_{(x_1,x_2,\ldots, x_t)}$ consisting of $t$ disjoint copies of $K_k^{(r)}$ and an edge $E$ meeting the $i^{th}$ copy of $K_k^{(r)}$ in $x_i$ vertices. We refer to $E$ as the bridge edge.
\end{enumerate}

An example of the family $\family$ for $r=5$ and $k=7$ can be seen in figure \ref{fig:big_family}, where the vertices surrounded by a dashed line represent a copy of $K_k^{(r)}$, and vertices grouped by a solid line represent a bridge edge.
\begin{figure}[!h]
  \centering
    \includegraphics[width=\textwidth]{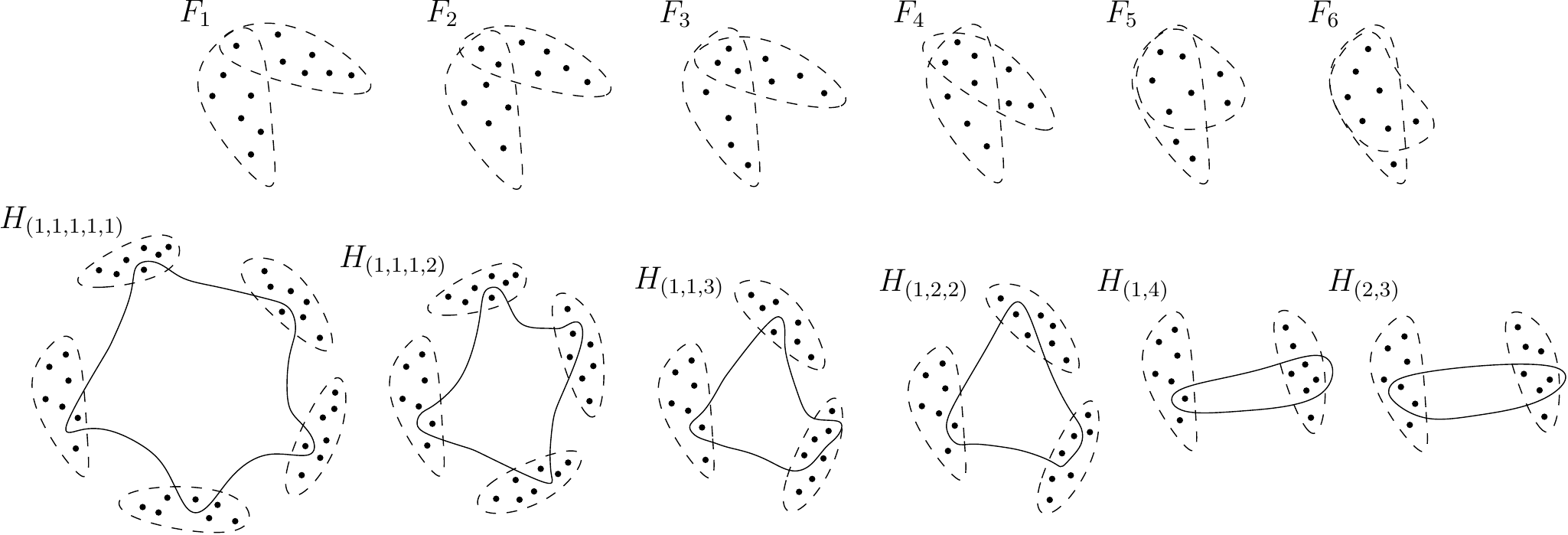}
    \caption{The family $\mathcal{F}$ of $r$-graphs for $r=5$ and $k=7$}
    \label{fig:big_family}
\end{figure}

First, let us deal with the case where $k$ divides $n$. Take the graph consisting of $\frac{n}{k}$ disjoint copies of $K_k^{(r)}$. It is easy to see that this is $\family$-saturated and thus \[\sat \le \frac{n}{k} \binom{k}{r} = O(n).\]

Note that in fact $\sat$ is equal to $\frac{n}{k} \binom{k}{r}$ (although we do not require this).

Now suppose that $k \nmid n$ and let $G = (V,E)$ be a graph on $n$ vertices that is $\family$-saturated. 
We will show that $E(G) = \Omega\left( n^{r-1} \right)$.
Let $A$ be the set of all vertices of $G$ that are contained in a $K_k^{(r)}$, and $B = V\setminus A$ be all vertices not contained in any $K_k^{(r)}$.

\begin{figure}[!h]
  \centering
    \includegraphics[width=.7\textwidth]{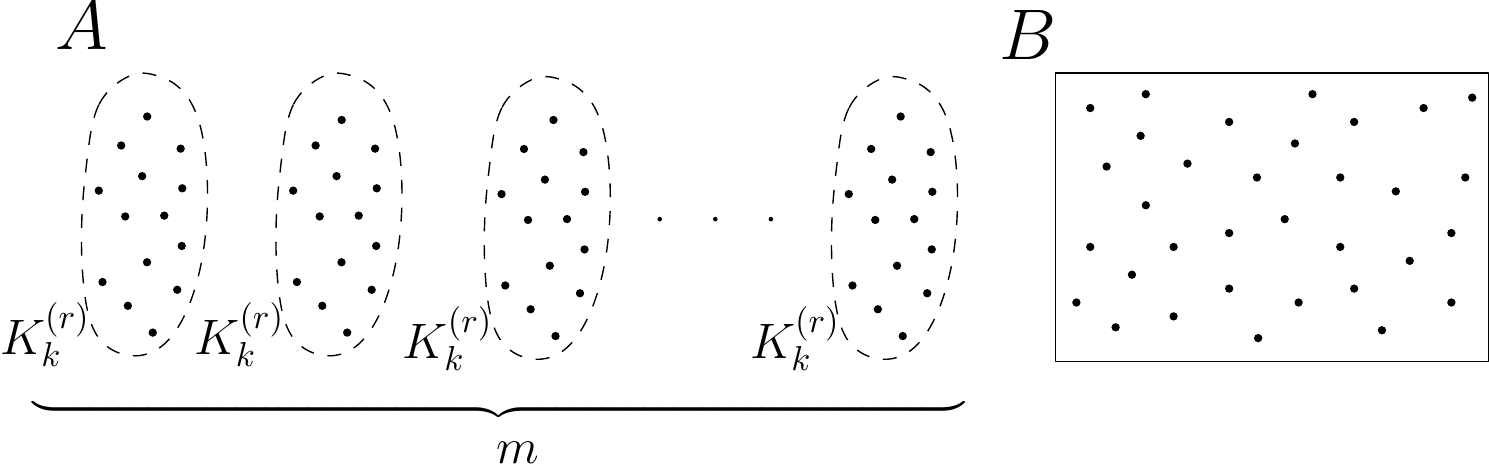}
    \caption{The structure of the graph $G$}
    \label{fig:A_and_B}
\end{figure}

Note that the subgraph of $G$ induced by $A$ must consist of $m$ disjoint copies of $K_k^{(r)}$ for some $m \ge 0$, by the choice of family $\family$. This implies that $B$ is not empty, since $k$ does not divide $n$.
Note also that if an $r$-set intersecting $B$ is not in $E(G)$, then adding this edge to $G$ must create a copy of $K_k^{(r)}$ -- it must create some graph in $\family$ and it cannot form a bridge between two or more $K_k^{(r)}$s by definition of $B$.

We make the following two claims about the number of edges in $G$:

\begin{claim} $G$ contains at least $\binom{mk}{r-1} - m\binom{k}{r-1}$ edges consisting of $r-1$ vertices in $A$ and one vertex in $B$.
 \label{claim1:edges between A and B}
\end{claim}

\begin{claim}
$G$ contains at least $\binom{|B|}{r-1} \frac{k-r}{r}$ edges.
\label{claim1:degrees in B}
\end{claim}

We can use these two claims to deduce the result. One of $A$ and $B$ contains at least half the vertices in $G$. If $|A| = mk \ge \frac{n}{2}$, then using claim \ref{claim1:edges between A and B} the number of edges in $G$ is $\Omega\left(n^{r-1}\right)$. If $|B| \ge \frac{n}{2}$, then using claim \ref{claim1:degrees in B} the number of edges in $G$ is $\Omega \left(n^{r-1}\right)$.

All that is left is to prove the two claims.

\begin{proof}[Proof of Claim \ref{claim1:edges between A and B}.]
This holds trivially if $m$ is $0$ or $1$, so we may assume $m \ge 2$.
Fix a set $X$ of $r-1$ vertices in $A$, not all in the same copy of $K_k^{(r)}$. 
We will show that $G$ contains at least one edge containing all vertices of $X$ together with a vertex in $B$. This proves the claim since there are  $\binom{mk}{r-1} - m\binom{k}{r-1}$ such sets $X$.

Fix some $x$ in $B$ (note that $B$ is non-empty) and suppose that the $r$-set $X \cup \{x\}$ is not in $E(G)$.
Since $G$ is $\family$-saturated, adding $X \cup \{x\}$ as an edge must create a copy of $K_k^{(r)}$ on some vertex set $K$. 

Note that for $y$ in $A \setminus X$, the $r$-set $X \cup \{y\}$ is not in $E(G)$, as otherwise it would form a copy of some  $H_{(x_1,\ldots,x_t)}$ in $\family$. Thus the vertices in $K \setminus ( X \cup \{x\})$ cannot be in $A$. 

Hence $K \setminus ( X \cup \{x\})$ is contained entirely in $B$, and so $G$ contains $k-r > 1$ edges that consist of all vertices of $X$ together with a vertex in $B$.
\end{proof}

\begin{proof}[Proof of Claim \ref{claim1:degrees in B}.]
Fix a set $X$ of $r-1$ vertices in $B$. We will show that $X$ is contained in at least $k-r$ edges of $G$.

Suppose first that we have that $X \cup \{y\}$ is an edge for all $y$ in $B \setminus X$. Then $X$ is in $|B| - |X| \ge k-r$ edges as required.

Otherwise, there exists some $y$ in $B\setminus X$ such that $X \cup \{y\}$ is not in $E(G)$. Then adding the edge $X \cup \{y\}$ must create a copy of $K_k^{(r)}$ in $G$, since $B$ contains no $K_k^{(r)}$s and so this edge cannot be a bridge edge. Then we have that $X$ is contained in $k-r$ other edges in that $K_k^{(r)}$.

Thus every $(r-1)$-set in $B$ is contained in at least $k-r$ edges. Each edge in $G$ contains at most $r$ different $(r-1)$-sets in $B$, and so the total number of edges in $G$ is at least
\[ \binom{|B|}{r-1} \frac{k-r}{r}. \]
\end{proof}

\end{proof}

Note that the size of the family $\family$ used in this proof is $(r - 1) + (p(r) - 1)$, where $p$ is the partition function. We have $\log{(p(r))} = \Theta(\sqrt{r})$, and so the size of the family grows exponentially with $\sqrt{r}$.

\section{A Forbidden Family of Constant Size}
\label{sec:constant size}

Recall that Tuza's conjecture (and its generalisation to $r$-graphs) concerns a forbidden family of just one $r$-graph. The size of the forbidden family in Theorem \ref{thm:big family} grows with $r$, the size of each edge. It is natural then to ask whether there exists a family of constant size, independent of $r$, which has this same non-convergence property.

We will prove that there is such a  family, using the family $\family$ consisting of the following four $r$-graphs:
\begin{itemize}
\item[$F$:] Two $K_k^{(r)}$s intersecting in one vertex,
\item[$H$:] $r$ disjoint copies of $K_k^{(r)}$ and an edge intersecting each $K_k^{(r)}$,
\item[$I_2$:] One $K_k^{(r)}$ and an edge intersecting it in exactly two vertices, and
\item[$I_{r-1}$:] One $K_k^{(r)}$ and an edge intersecting it in exactly $r-1$ vertices.
\end{itemize}

An example of the family $\family$ for $r=5$ and $k=15$ can be seen in figure \ref{fig:constant_family}, where the vertices surrounded by a dashed line represent a copy of $K_k^{(r)}$, and vertices grouped by a solid line represent an extra edge.

\begin{figure}[!h]
  \centering
    \includegraphics[width=.8\textwidth]{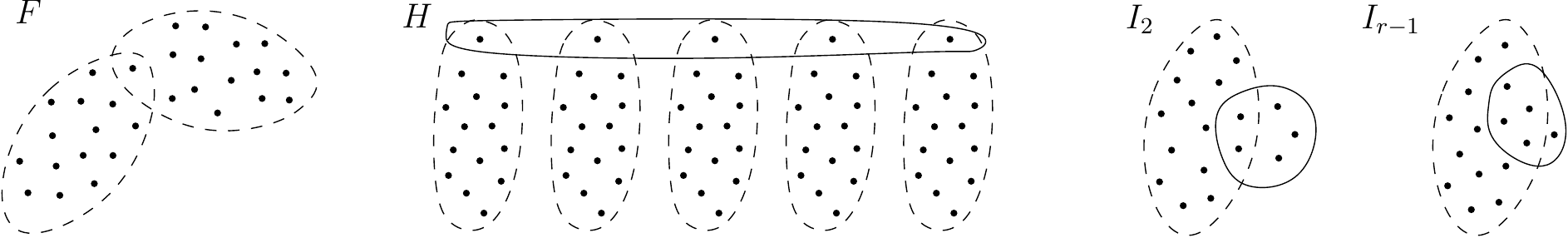}
    \caption{The family $\mathcal{F}$ of $r$-graphs for $r=5$ and $k=15$}
    \label{fig:constant_family}
\end{figure}

This family was obtained by considering the two types of graph we had in our previous family, and finding a smaller set of graphs that fulfil the same role for each. 

The previous family contained all of the graphs $H_{(x_1,\ldots,x_t)}$ to ensure that the graph consisting of disjoint copies of $K_k^{(r)}$ was $\family$-saturated. In particular, this meant that when $k \mid n$ there was an $\family$-saturated graph of size $O(n)$. We will keep ${H = H_{(1,1,\ldots,1)}}$ to ensure there are no edges intersecting $r$ different copies of $K_k^{(r)}$, and replace the other $p(r) - 2$ graphs by $I_2$ and $I_{r-1}$. With this smaller family we can no longer find a graph of size $O(n)$ that is $\family$-saturated, so we lose the large gap in asymptotics that we had in Theorem \ref{thm:big family}. However, we can construct (for $n$ divisible by $k$) an $\family$-saturated graph that has size $O\left( n ^{r-2} \right)$.

The previous family contained all of the graphs  $F_i$ to ensure that all of the copies of $K_k^{(r)}$ in an $\family$-saturated graph must be disjoint. For $k$ sufficiently large, these $r-1$ different graphs can be replaced by the three graphs $I_2$, $I_{r-1}$ and $F = F_1$, which achieve the same goal.

\begin{theorem} \label{thm:constant size family}
For all $r \ge 3$ there exists a family $\family$ of four $r$-graphs such that $\frac{\sat}{n^{r-1}}$ does not converge.
\end{theorem}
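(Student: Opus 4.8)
The plan is to mirror the structure of the proof of Theorem~\ref{thm:big family}, verifying that the four-element family $\family = \{F, H, I_2, I_{r-1}\}$ does the job in two directions: a construction when $k \mid n$ and a lower bound when $k \nmid n$. Throughout I fix $k$ large compared to $r$ (large enough that the ``$k-r > 1$'' type inequalities have enough room, and that the structural arguments below go through).

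\medskip
\textbf{The lower bound ($k \nmid n$).} I would argue almost verbatim as in Theorem~\ref{thm:big family}. Let $G$ be $\family$-saturated on $n$ vertices, let $A$ be the set of vertices lying in some $K_k^{(r)}$ and $B = V(G) \setminus A$. The first task is to re-establish the structural fact that the copies of $K_k^{(r)}$ in $G$ are pairwise disjoint: if two copies met, they would meet in some number $i$ of vertices with $1 \le i \le k-1$; presence of $F = F_1$ forbids $i=1$, presence of $I_2$ forbids $i \ge 2$ (the cross-edges of one clique hit the other clique in at least two vertices, unless the intersection is small — here one has to check the cases $i=2, \dots, r-1$ and $i \ge r$ separately, using $I_2$ and $I_{r-1}$ as appropriate, which is where ``$k$ sufficiently large'' is used). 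So again $G[A]$ is $m$ disjoint copies of $K_k^{(r)}$, and $B \ne \emptyset$. Next, as before, an $r$-set meeting $B$ that is a non-edge must, when added, create a $K_k^{(r)}$ (it cannot create $F$ or $H$ without already being inside $A$, and $I_2, I_{r-1}$ would require a pre-existing $K_k^{(r)}$ adjacent to it, forcing the new clique anyway). Then Claims~\ref{claim1:edges between A and B} and \ref{claim1:degrees in B} go through word for word — the only facts used are disjointness of the cliques and the ``adding an edge meeting $B$ creates a $K_k^{(r)}$'' property, plus the observation (now supplied by $I_2$ and $I_{r-1}$) that an edge can't meet a single $K_k^{(r)}$ in an ``intermediate'' number of vertices. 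Hence $\sat = \Omega(n^{r-1})$ when $k \nmid n$.

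\medskip
\textbf{The upper bound ($k \mid n$).} Here I cannot use disjoint cliques alone, because that graph is no longer $\family$-saturated: adding an edge between two cliques need not create any member of the smaller family (we only forbid a bridge touching $r$ distinct cliques, via $H$). The fix, as the excerpt hints, is to add a sparse ``backbone'' so that every non-edge completes to a copy of $H$ or $F$. Concretely, I would take $\frac{n}{k}$ disjoint copies of $K_k^{(r)}$ and add a carefully chosen $O(n^{r-2})$ edges that (i) do not themselves create $F$, $H$, $I_2$, or $I_{r-1}$ — in particular each added edge meets each clique in $0$ or all-but-controlled ways and never creates an intersecting pair of cliques — and (ii) ensure that any further $r$-set, when added, either sits across enough cliques to complete a copy of $H$, or meets a clique in a way completing $I_2$ or $I_{r-1}$. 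One clean way: designate $r-1$ of the cliques as ``hubs'' and, for every $(r-1)$-set of vertices spread across the remaining cliques with one vertex per clique, one already has the bridge edges of $H$ lying around; count shows $O(n^{r-2})$ suffices because we only need one ``free slot'' per potential bridge. I would then check $\family$-saturation of this graph directly: it contains no member of $\family$ by construction, and every non-edge $e$ falls into cases according to how $e$ meets the cliques, each case completing one of $F, H, I_2, I_{r-1}$.

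\medskip
\textbf{Main obstacle.} The delicate part is the upper-bound construction: one must add $o(n^{r-1})$ extra edges that simultaneously create \emph{no} copy of any of the four forbidden graphs yet leave the graph saturated, and then verify saturation by a somewhat tedious case analysis on how an arbitrary added edge meets the clique partition. The lower bound, by contrast, is a routine adaptation of the Theorem~\ref{thm:big family} argument once the clique-disjointness lemma is re-derived from $\{F, I_2, I_{r-1}\}$; the only care needed there is checking that for every intersection size $1 \le i \le k-1$ one of these three graphs appears, which forces the ``$k$ sufficiently large'' hypothesis. Since the two asymptotic bounds $O(n^{r-2})$ and $\Omega(n^{r-1})$ are of different orders, $\sat/n^{r-1}$ oscillates between $0$ (along multiples of $k$) and something bounded away from $0$, so it does not converge.
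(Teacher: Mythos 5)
Your lower bound is essentially the paper's argument and is fine in outline, with one caveat: the claims do not go through ``word for word.'' With the four-graph family, the assertion that $X \cup \{y\}$ is a non-edge for every $y \in A \setminus X$ only holds when the $r-1$ vertices of $X$ lie in $r-1$ \emph{different} cliques (so that adding $y$ creates $H$, or $I_2$ if $y$ shares a clique with some $v_i$); the paper therefore counts only transversal $(r-1)$-sets, obtaining $\binom{m}{r-1}k^{r-1}$ rather than $\binom{mk}{r-1}-m\binom{k}{r-1}$, which is still $\Omega(n^{r-1})$ when $|A| \ge n/2$.

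The genuine gap is in your upper bound. A sparse ``backbone'' added to disjoint cliques cannot make the graph saturated once $r \ge 5$: consider a non-edge meeting one clique in $j$ vertices with $3 \le j \le r-2$ and meeting fewer than $r$ cliques in total (intersection pattern $(3,1,\ldots,1)$, say). Such an $r$-set can never complete $H$ (it does not meet $r$ cliques), nor $I_2$ or $I_{r-1}$ (it does not meet any clique in exactly $2$ or exactly $r-1$ vertices); these are properties of the $r$-set relative to the fixed clique partition, and no choice of backbone changes them. The only remaining way its addition could create a member of $\family$ is by creating a brand-new $K_k^{(r)}$, which would require $\binom{k}{r}-1$ pre-existing edges on a specific vertex set — something your hub construction does not supply. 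The paper resolves this by going in the opposite direction: it takes the $n/k$ disjoint cliques together with \emph{all} other $r$-sets \emph{except} those meeting $r$ cliques in one vertex each, or meeting some clique in exactly $2$ or exactly $r-1$ vertices. Every excluded $r$-set then completes $H$, $I_2$ or $I_{r-1}$ immediately; every included $r$-set must meet some clique in at least $3$ vertices and hence meets at most $r-2$ cliques, so there are only $O(n^{r-2})$ of them; and the substantive verification — which your proposal omits entirely — is that this dense graph contains no copy of $K_k^{(r)}$ beyond the original ones (a case analysis on the size of the intersection of a putative new clique with the old ones, using $k$ large), hence no copy of $F$. Without that verification, or a fully specified alternative construction handling the intermediate intersection patterns, the upper bound is not established.
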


\begin{proof}[Proof of Theorem \ref{thm:constant size family}.]
Fix $r\ge3$ and $k \ge \max\{r+1, 2r-4\}$.

Let $\family$ be the set containing the four $r$-graphs $F, H, I_2$ and $I_{r-1}$, as defined earlier.

First, we will construct an example of an $\family$-saturated graph with $O\left(n^{r-1}\right)$ edges when $k$ divides $n$. 
\begin{claim} $\sat = O\left(n^{r-2}\right)$ when $k\mid n$. \label{claim2:when k|n}
\end{claim}
\begin{proof}
Let $G$ be a graph consisting of $\frac{n}{k} = m $ disjoint copies of $K_k^{(r)}$, together with all other edges \emph{except} those which:
\begin{enumerate}[i)]
\item \label{item:intersect_1} intersect $r$ of the $K_k^{(r)}$s, each in one vertex, or
\item \label{item:intersect_2} intersect one of the $K_k^{(r)}$s in exactly two vertices, or
\item \label{item:intersect_r-1}intersect one of the $K_k^{(r)}$s in exactly $r-1$ vertices.
\end{enumerate}

Clearly, adding any $r$-set not in $E(G)$ to the graph $G$ creates one of the graphs in the family $\family$ -- graphs $I_2$, $I_{r-1}$ and $H$ respectively.

We will show that $G$ contains no other $K_k^{(r)}$s except for the original ones, thus proving that $G$ does not contain any graph in $\family$. 
Suppose $G$ did contain another $K_k^{(r)}$. Using that $k \ge 2r-2$, we have that one of the following three cases holds:
\begin{itemize}
\item the new $K_k^{(r)}$ intersects all of the original $K_k^{(r)}$s in at most one vertex, and thus it contains an edge of type (\ref{item:intersect_1});
\item the new $K_k^{(r)}$ and one of the original $K_k^{(r)}$s intersect in between two and $k-(r-2)$ vertices, and thus it contains an edge of type (\ref{item:intersect_2}); or
\item the new $K_k^{(r)}$ and one of the original $K_k^{(r)}$s intersect in $r-1$ or more vertices, and thus it contains an edge of type (\ref{item:intersect_r-1}).
\end{itemize}
Whichever case we are in, we have a contradiction. Thus $G$ is $\family$-saturated. 

We now need to calculate the size of $G$. 
The number of $r$-sets meeting exactly $t$ of the $K_k^{(r)}$s is $O\left(n^t\right)$. Note that $G$ does not contain any edges intersecting more than $r-2$ of the $K_k^{(r)}$s and thus we have
$e(G) = O\left(n^{r-2}\right)$ and Claim \ref{claim2:when k|n} follows.
\end{proof}

Now we will consider the case where $k$ does not divide $n$.

\begin{claim} $\sat = \Omega\left(n^{r-1}\right)$ when $k \nmid n$.
\end{claim}
\begin{proof}

let $G = (V,E)$ be a graph on $n$ vertices that is $\family$-saturated. 
Let $A$ be the set of all vertices of $G$ that are contained in a $K_k^{(r)}$, and $B = V\setminus A$ be all vertices not contained in any $K_k^{(r)}$. 

The choice of family $\family$ implies that all of the copies of $K_k^{(r)}$ contained in $A$ must be disjoint:
\begin{itemize}
\item $F$ forbids two $K_k^{(r)}$s intersecting in exactly one vertex.
\item $I_2$ forbids two $K_k^{(r)}$s intersecting in at least two vertices and each containing at least $r-2$ vertices not in the intersection.
\item $I_{r-1}$ forbids two $K_k^{(r)}$s intersecting in at least $r-1$ vertices and each containing at least one vertex not in the intersection.
\end{itemize}
Since $k$ was chosen with $k \ge 2r-4$, any two intersecting copies of $K_k^{(r)}$s fall in one of these three categories.

Thus $A$ consists of $m$ disjoint copies of $K_k^{(r)}$ for some $m$, together with some extra edges that go between them. Since $k$ does not divide $n$, we can  conclude that $A$ is not all of $V(G)$, or equivalently that $B$ is non-empty. 

We make the following two claims about the number of edges in $G$:

\begin{claim} 
If $m \ge r-1$ then $G$ contains at least $\binom{m}{r-1}k^{r-1}$ edges consisting of $r-1$ vertices in $A$ and one vertex in $B$.
 \label{claim2:edges between A and B}
\end{claim}

\begin{claim}
If $|B| \ge k-1$, then $G$ contains $\binom{|B|}{r-1}\frac{k-r}{r}$ edges.
\label{claim2:degrees in B}
\end{claim}

We can use these two claims to deduce the result. 
One of $A$ and $B$ contains at least half the vertices in $G$. 
If $|A| = mk \ge \frac{n}{2}$, then using claim \ref{claim2:edges between A and B} the number of edges in $G$ is at least $\Omega\left(n^{r-1}\right)$.
If $|B| \ge \frac{n}{2}$, then using claim \ref{claim2:degrees in B} the number of edges in $G$ is at least $\Omega\left(n^{r-1}\right)$.

All that is left is to prove the two claims.

\begin{proof}[Proof of Claim \ref{claim2:edges between A and B}.]
Fix $r-1$ vertices $v_1, \ldots, v_{r-1}$, each in a different copy of $K_k^{(r)}$ in $A$. 
We will show that $G$ contains at least one edge containing all of $v_1, \ldots v_{r-1}$ together with a vertex in $B$.

If all possible such edges exist, then we are done (recall that $B$ is non-empty).

Otherwise, there is some $x$ in $B$ such that the $r$-set $\{x,v_1,\ldots,v_{r-1}\}$ is not in $E(G)$.
Adding this edge must create a graph in $\family$, and so it must create a new copy of $K_k^{(r)}$ (it cannot be any of the `extra' edges in $I_2$, $I_{r-1}$ or $H$).

Consider this new $K_k^{(r)}$. 
Suppose for a contradiction it contains some vertex $y$ in ${A \setminus \{ v_1 \ldots v_{r-1}\}}$. 
If $y$ is in the same original $K_k^{(r)}$ as one of the $v_i$s then the edge $\{y,v_1,\ldots v_{r-1}\}$ creates a copy of $I_2$. 
If $y$ is not in the same original $K_k^{(r)}$ as any of the $v_i$s then the edge $\{y,v_1,\ldots v_{r-1}\}$ creates a copy of $H$. 
Thus $\{y,v_1,\ldots v_{r-1}\}$ is not in $E(G)$, and we have a contradiction.

Thus the other vertices of this new $K_k^{(r)}$ are in $B$, and $G$ contains $k-r > 1$ edges containing all of $v_1, \ldots v_{r-1}$ together with a vertex in $B$.
\end{proof}

\begin{proof}[Proof of Claim \ref{claim2:degrees in B}.]
Fix a set $X$ of $r-1$ vertices in $B$. We will show that $X$ is contained in at least $k-r$ edges in $G$.

Suppose first that $X \cup \{y\}$ is an edge for all $y$ in $B \setminus X$. Then $X$ is in $|B| - |X| \ge k-r$ edges as required.

Otherwise, there exists some $y$ in $B\setminus X$ such that $X \cup \{y\}$ is not in $E(G)$. 
Note that adding the edge $X \cup \{y\}$ must create some graph in $\family$. It must thus create a $K_k^{(r)}$, since $B$ contains no $K_k^{(r)}$s and so it cannot be the `extra' edge in $I_2$, $I_{r-1}$ or $H$. 
Then we have that $X$ is contained in $k-r$ other edges in that $K_k^{(r)}$.

Thus every $r-1$ set in $B$ is contained in at least $k-r$ edges. Each edge in $G$ contains at most $r$ different $(r-1)$-sets in $B$, and so the total number of edges in $G$ is at least
\[ \binom{|B|}{r-1} \frac{k-r}{r}. \]
\end{proof}
\end{proof}
\end{proof}

\section{Obtaining an Small Saturation Number on a Denser Set}
\label{sec:dense}
In both Theorem \ref{thm:big family} and Theorem \ref{thm:constant size family}, the saturation number is asymptotically small ($O\left(n^{r-2}\right)$) when $n$ is divisible by $k$  and asymptotically large ($\Theta \left(n^{r-1}\right)$) for all other values of $n$. Since $k$ is at least as big as $r$, this means that the set of values where the saturation number is asymptotically small has low density -- less than $1/r$.
It is natural to ask whether it is possible to have a forbidden family where the saturation number has different asymptotics for complementary subsets of the naturals of equal density. For example, could we ensure $\sat$ is asymptotically small on even numbers and asymptotically large on odd numbers? 

It turns out that it is possible. The proof will use a family similar to the one in Theorem \ref{thm:big family}. However, rather than just using $K_k^{(r)}$ as a base graph, we will take two base graphs of different even orders. The family will contain all possible intersections of the two base graphs, and all graphs consisting of disjoint unions of copies of the base graphs together with a bridge edge. 

For large even $n$ there is an $\family$-saturated graph on $n$ vertices that uses few edges; namely taking disjoint copies of the base graphs. However, for odd $n$ we will need to use many more edges.

A first attempt at choosing the two base graphs might be to take $K_k^{(r)}$ and $K_{k+2}^{(r)}$ for some even $k$. However, $K_{k+2}^{(r)}$ contains two $K_k^{(r)}$s intersecting in $k-2$ vertices, which is a graph we would want to include in our forbidden family. This is a problem, as if that graph was forbidden, all copies of $K_{k+2}^{(r)}$ would be forbidden too. 

Instead, we will take $K_k^{(r)}$ and any graph on $k+2$ vertices which has certain helpful properties: one of which is that it contains only one copy of $K_k^{(r)}$.

\begin{theorem} \label{thm:evens and odds}
For all $r \ge 2$ there exists a family $\family$ of $r$-graphs such that 
	$$ \sat =
		\begin{cases}
			O(n) & \text{if $n$ is even}\\			
			\Omega\left(n^{r-1}\right) & \text{if $n$ is odd.} 
		\end{cases} 
	$$
\end{theorem}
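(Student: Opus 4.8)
The plan is to mimic the structure of the proof of Theorem~\ref{thm:big family}, but with the single base graph $K_k^{(r)}$ replaced by \emph{two} base graphs $B_1$ and $B_2$ of even orders $k_1 < k_2$ with $\gcd(k_1,k_2)$ controlling the parity behaviour. Concretely, I would take $B_1 = K_{k}^{(r)}$ for some large even $k$, and $B_2$ a carefully chosen $r$-graph on $k+2$ vertices with the following properties: (i) $B_2$ contains exactly one copy of $K_k^{(r)}$, (ii) $B_2$ is not contained in $K_m^{(r)}$ for any $m$, and (iii) adding any non-edge to $B_2$ (viewed appropriately) still does not create a $K_{k}^{(r)}$ other than the designated one. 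The family $\family$ would then consist of: (a) all $r$-graphs obtained as two copies of $B_i$ and $B_j$ ($i,j \in \{1,2\}$) intersecting in between $1$ and $\min(|B_i|,|B_j|)-1$ vertices, \emph{except} for the single copy of $K_k^{(r)}$ living legitimately inside $B_2$; and (b) all ``bridge'' graphs consisting of $t$ disjoint copies of base graphs (with repetition) together with one extra edge meeting the $i^{\text{th}}$ copy in $x_i \ge 1$ vertices, $\sum x_i = r$. The point of excluding the $K_k^{(r)}$-inside-$B_2$ configuration from (a) is exactly the obstruction flagged in the text: otherwise forbidding that intersection pattern would forbid $B_2$ itself.

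For the even case, I would exhibit the $\family$-saturated graph consisting of a disjoint union of copies of $B_1$ and $B_2$. Since $|B_1| = k$ and $|B_2| = k+2$ are both even and $k$ is even, for any sufficiently large even $n$ we can write $n = a k + b(k+2)$ with $a,b \ge 0$ (this is possible for all large even $n$ because $\gcd(k,k+2)=2$ and $n$ is even; a standard Chicken McNugget / numerical-semigroup argument handles the finitely many small exceptions, and those only affect a bounded range). This graph has $O(n)$ edges. One must check it is $\family$-saturated: it contains no member of $\family$ because any two base-graph copies in it are disjoint (ruling out family~(a)) and no extra edge is present (ruling out family~(b)); and adding any new $r$-set creates either a bridge graph from family~(b) (if the new edge meets $\ge 1$ copy) or an intersecting pattern from family~(a) --- here one must verify the added edge always \emph{completes} some forbidden configuration, which for an added edge inside a single copy means it creates a bigger intersecting structure, and one should double-check this case carefully since $B_2$ is not complete, so ``adding an edge inside a $B_2$'' needs property~(iii) to guarantee it still yields a member of $\family$ and not the one excluded copy of $K_k^{(r)}$.

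For the odd case, let $G$ be $\family$-saturated on odd $n$. As before, let $A$ be the union of all vertices lying in a copy of $B_1$ or $B_2$, and $B = V \setminus A$. The forbidden graphs in family~(a) force every two base-graph copies inside $A$ to be vertex-disjoint (modulo the single-exception subtlety: a copy of $K_k^{(r)}$ that sits inside a copy of $B_2$ is not a ``separate'' copy and should be accounted for by defining $A$ via maximal base-graph copies). Hence $A$ is a disjoint union of base graphs, so $|A|$ is even, and since $n$ is odd, $B \ne \emptyset$. The two claims then go through essentially verbatim: (1) fixing $r-1$ vertices in $A$ spread across at least two distinct base-graph copies, any missing edge to a vertex $x \in B$ must, when added, create a base graph; that base graph cannot use further vertices of $A$ (each such would itself create a bridge graph from family~(b) or an intersecting pattern from family~(a)), so it is completed using $\ge k-r > 1$ vertices of $B$, giving $\Omega(|A|^{r-1})$ cross-edges; and (2) fixing $r-1$ vertices in $B$, any missing extension must create a base graph inside $B$, which is impossible, or the extension exists --- either way each $(r-1)$-set in $B$ has co-degree $\ge k-r$, giving $\Omega(|B|^{r-1})$ edges. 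Since $\max(|A|,|B|) \ge n/2$, we get $\sat = \Omega(n^{r-1})$.

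The main obstacle will be pinning down the base graph $B_2$ and verifying it has all the required properties simultaneously --- in particular that it contains a \emph{unique} $K_k^{(r)}$ and that ``completing'' edges inside or incident to a copy of $B_2$ behave correctly, so that the saturation argument in the even case and the disjointness argument in the odd case both survive the fact that $B_2$ is not complete. A clean choice is to let $B_2$ be $K_{k+2}^{(r)}$ with a small number of edges deleted --- specifically delete all edges lying inside some fixed $(r+1)$-subset $S$ of size, say, $k+2 - k = 2$ is too small, so instead delete all $\binom{k-1}{r}$ edges inside a fixed $(k-1)$-subset's complement is the wrong count; the right move is to delete a single carefully placed edge or a small ``book'' so that no $K_k^{(r)}$ other than the obvious one survives while keeping enough edges that all the completion properties hold. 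Getting this combinatorial gadget exactly right, and then checking the finitely many small-$n$ exceptions in the numerical-semigroup step, is where the real work lies; the asymptotic counting in the two claims is routine given Theorem~\ref{thm:big family}.
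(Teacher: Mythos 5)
Your overall strategy matches the paper's: two even-order base graphs, namely $K_k^{(r)}$ and a gadget on $k+2$ vertices containing a unique $K_k^{(r)}$, a disjoint union of base graphs for even $n$ (via the numerical-semigroup step), and the $A$/$B$ dichotomy with the two counting claims for odd $n$. However, there are concrete gaps. First, the gadget is never constructed: you explicitly defer ``getting this combinatorial gadget exactly right,'' and your closing attempt to build it by deleting edges from $K_{k+2}^{(r)}$ trails off without settling on anything. The paper's choice is explicit: $L$ is a $K_k^{(r)}$ together with two $K_{k-1}^{(r)}$s, all sharing a common $(k-2)$-set of vertices. This is connected, contains exactly one $K_k^{(r)}$, and --- crucially --- has the property that for every edge $e$ and every $(r-1)$-subset $s \subseteq e$ there is a second edge $e'$ with $e \cap e' = s$.

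Second, and this is the step that would actually fail: your family omits the analogue of the paper's type (c) graphs, namely ``$L$ plus one extra edge on the same vertex set.'' Since $B_2$ is not complete, the disjoint union of base graphs has missing $r$-sets lying entirely inside a single copy of $B_2$; adding such an $r$-set creates no bridge and need not create a new base graph intersecting an old one, so without the type (c) graphs your candidate graph for even $n$ is not $\family$-saturated. Your property (iii) points the wrong way: you require that adding a non-edge to $B_2$ \emph{not} create a new $K_k^{(r)}$, whereas saturation requires that it \emph{does} create something forbidden. Third, in your cross-edge claim you conclude that the newly created base graph supplies an edge of $G$ through $v_1,\dots,v_{r-1}$ and a vertex of $B$; this is automatic when that graph is the complete $K_k^{(r)}$ but not when it is $B_2$, which is exactly why the paper builds the ``every $(r-1)$-subset of an edge extends to a second edge'' property into $L$. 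Your degree claim for $B$ likewise glosses over the case where an added $r$-set $Y \subseteq B$ completes a copy of $L$ rather than of $K_k^{(r)}$; the paper rules this out by noting the unique $K_k^{(r)}$ inside the new $L$ would otherwise pre-exist in $A$ and be forced to meet $Y$.
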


\begin{proof}

Fix any even integer $k>r+1$. Let $K = K_k^{(r)}$ and let $L$ be a graph which satisfies the following properties:
\begin{itemize}
\item $L$ has $k+2$ vertices;
\item $L$ is connected;
\item $L$ contains exactly one copy of $K_k^{(r)}$; and
\item For any edge $e$ of $L$ and any $(r-1)$-sized subset $s$ of the edge $e$, there is another edge $e'$ in $L$ such that $e \cap e' = s$.
 \end{itemize}
One such $L$ consists of a $K_k^{(r)}$ and two $K_{k-1}^{(r)}$s with a common intersection of $k-2$ vertices. It is easy to see that this has the required properties.

We call $k$ and $L$ the \emph{base graphs}. An example of the these two graphs for $k=18$ can be seen in figure \ref{fig:evens}, where if a set of vertices is surrounded by a dashed line then all edges contained in that set exist.
\begin{figure}[!h]
  \centering
    \includegraphics[width=.6\textwidth]{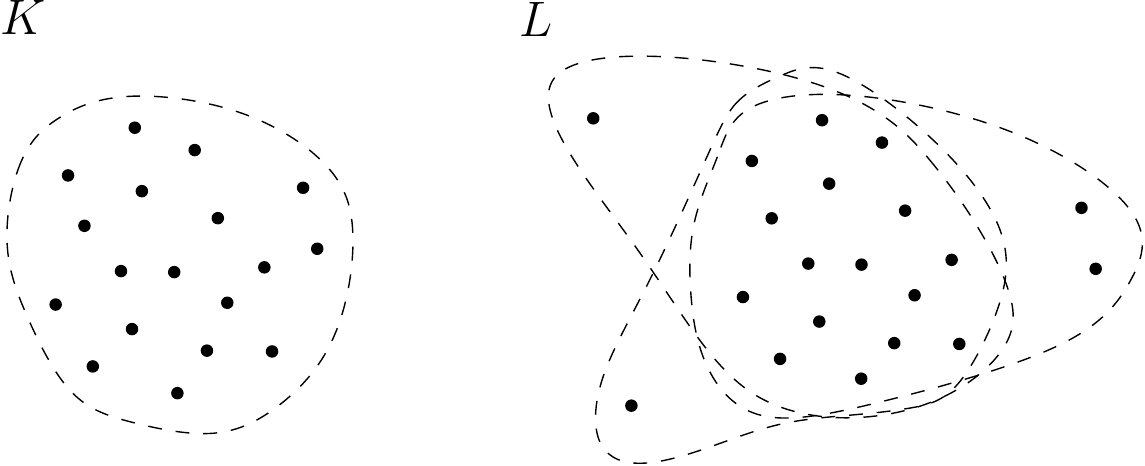}
    \caption{The two graphs under consideration for $k=14$.}
    \label{fig:evens}
\end{figure}

Take $\family$ to be the family containing all of the following $r$-graphs:
\begin{enumerate}[a)]
\item Every graph comprising $t$ disjoint graphs $H_1,H_2,\ldots H_t$ (for $2 \le t \le r$) where each $H_i$ is a base graph, together with an edge $E$ meeting each $H_i$ in $x_i \ge 1$ vertices such that $\sum_{i=1}^t x_i = r$. We call $E$ a bridge edge.
\item Every graph comprising two base graphs on vertex sets $V_1$ and $V_2$ with non-empty intersection and neither contained in the other -- that is, $V_1 \cap V_2, V_1 \setminus V_2$ and $V_2 \setminus V_1$ all non-empty.
\item Every graph comprising $L$ plus a single extra edge on the same vertex set.
\end{enumerate}

First, let us deal with the case where $n$ is even. For all $n$ sufficiently large, (in particular, at least $\frac{k(k-2)}{2}$), we can write $n$ as a sum $ak + b(k+2)$ for some $a,b \in \mathbb{N}$. 

Take $G$ to be a graph on $n$ vertices consisting of $a$ disjoint copies of $K$ and $b$ disjoint copies of $L$. It is clear that adding any edge will create a graph in the family $\family$: adding a missing edge between base graphs creates a graph of type (a), and adding a missing edge within a copy of $L$ creates a graph of type (c).
Thus 
\begin{align*}
\sat \le \edge(G) &\le  a\binom{k}{r} + b \binom{k+2}{r} \\
&\le (ak + b(k+2))\frac{1}{r}\binom{k+1}{r-1}\\
& = O(n)
\end{align*}

Now suppose that $n$ is odd and let $G = (V,E)$ be a graph on $n$ vertices that is $\family$-saturated.

Let $A$ be the set of all vertices of $G$ that are contained in a copy of one of the base graphs, and $B = V\setminus A$ be all vertices not contained in any copy of a base graph. 

Note that the subgraph induced on $A$ must consist of disjoint copies of the two base graphs, by the choice of family $\family$. This implies that $B$ is not empty, since $n$ is odd and both base graphs have an even number of vertices.

Let $X$ be an $r$-set meeting $B$ that is not in $E(G)$. Adding the edge $X$ to $G$ must create some graph in $\family$. The edge $X$ cannot form a bridge between two $K_k^{(r)}$s by definition of $B$, and it also cannot add an extra edge to an existing copy of $L$ for the same reason. Thus adding such an edge must create a copy of one of the base graphs, $K$ or $L$.

We make the following two claims about the number of edges in $G$:

\begin{claim} $G$ contains at least \[\binom{\frac{|A|}{k+2}}{r-1}\] edges consisting of $r-1$ vertices in $A$ and one vertex in $B$.
 \label{claim3:edges between A and B}
\end{claim}

\begin{claim}
If $|B| \ge k-1$ then $G$ contains at least $\binom{|B|}{r-1} \frac{k-r}{r}$ edges.
\label{claim3:degrees in B}
\end{claim}

We can use these two claims to deduce the result. 
One of $A$ and $B$ contains at least half the vertices in $G$.
If $|A| \ge \frac{n}{2}$, then using claim \ref{claim3:edges between A and B} the number of edges in $G$ is at least $\Omega\left(n^{r-1}\right)$. 
If $|B| \ge \frac{n}{2}$, then using claim \ref{claim3:degrees in B} the number of edges in $G$ is $\Omega\left(n^{r-1}\right)$.

All that is left is to prove the two claims.

\begin{proof}[Proof of Claim \ref{claim3:edges between A and B}.]
Fix $r-1$ vertices $v_1, \ldots, v_{r-1}$, each in a different base graph in $A$ (of which there are at least $\frac{|A|}{k+2}$). 
We will show that $G$ contains at least one edge containing all of $v_1, \ldots v_{r-1}$ together with a vertex in $B$. Then the number of edges between $A$ and $B$ is at least the desired amount.

If all possible such edges exist, then we are done (recall that $B$ is non-empty).

Otherwise, there is some $x$ in $B$ such that the $r$-set $\{x,v_1,\ldots,v_{r-1}\}$ is a missing edge.
Adding this edge must create one of the graphs in $\family$. It cannot be a bridge edge as $B$ contains no copies of $L$. It also cannot be the extra edge in a copy of `$L$ plus an edge', as no vertex in $B$ is contained in a copy of $L$. Thus adding the edge must create a new copy of one of the base graphs, $K_k^{(r)}$ or $L$. 

The other vertices of this new base graph cannot be in $A$: if $y$ is in $A \setminus \{ v_1 \ldots v_{r-1}\}$, then $\{y,v_1,\ldots v_{r-1}\}$ is a non-edge, otherwise it serves as a bridge edge and $G$ contains a graph of type (a) in the family $\family$.

So the other vertices of this new base graph are all in $B$. 
We then have that there is at least one edge containing all of $v_1, \ldots v_{r-1}$ together with a vertex in $B$: this is obviously true if the base graph was $K_k^{(r)}$, and true by the properties insisted upon earlier if the base graph was $L$.
\end{proof}

\begin{proof}[Proof of Claim \ref{claim3:degrees in B}.]
To apply a similar proof to before, we first want to show that if $Y$ is an $r$-set in $B$ that is not in $E(G)$, then adding $Y$ to $G$ creates a new copy of $K_k^{(r)}$. Suppose for contradiction this is not the case.

Adding $Y$ must create a graph in the family $\family$, so $Y$ must create one of:
\begin{itemize}
\item a graph of type (a) in $\family$;
\item a copy of the graph $L$; or
\item a copy of the graph $L$ plus an edge.
\end{itemize} 
However, no vertex in $B$ is in a copy of one of the base graphs. This implies both that $Y$ cannot be a bridge edge between copies of the base graph and also that $Y$ cannot be an extra edge added to copy of $L$. 
Thus we must have that $Y$ creates a copy of $L$.

Note that $L$ contains a copy of $K_k^{(r)}$. Since $Y$ does not create a $K_k^{(r)}$, this  $K_k^{(r)}$ must already exist. However, then $Y$ must intersect this $K_k^{(r)}$ in $r-2$ vertices, contradicting that no vertex in $B$ is contained within a copy of $K_k^{(r)}$. 

Now, fix a set $X$ of $r-1$ vertices in $B$. We will show that $X$ is contained in at least $k-r$ edges in $G$.

Suppose first that $X \cup \{y\}$ is an edge for all $y$ in $B \setminus X$. Then $X$ is in $|B| - |X| \ge k-r$ edges as required.

Otherwise, there exists some $y$ in $B\setminus X$ such that $X \cup \{y\}$ is not in $E(G)$. 
Note that adding the edge $X \cup \{y\}$ must create some graph in $\family$. It must thus create a $K_k^{(r)}$, since $B$ contains no $K_k^{(r)}$s and so it cannot be the `extra' edge in $I_2$, $I_{r-1}$ or $H$. 
Then we have that $X$ is contained in $k-r$ other edges in that $K_k^{(r)}$.

Thus every $r-1$ set in $B$ is contained in at least $k-r$ edges. Each edge in $G$ contains at most $r$ different $(r-1)$-sets in $B$, and so the total number of edges in $G$ is at least
\[ \binom{|B|}{r-1} \frac{k-r}{r}. \]

\end{proof}
\end{proof}

\section{Some Open Problems}
\label{sec:problems} 

The main questions still left unanswered are the two conjectures in Section 1; that is, Tuza's conjecture and its generalisation to $r$-graphs. There are some subsidiary questions that may help with progress towards that goal: the first towards a counterexample and the second towards a proof. 

In Theorem \ref{thm:constant size family}, we defined $H$ to be the graph consisting of $r$ disjoint copies of $K_k^{(r)}$ and an edge intersecting each $K_k^{(r)}$. This seems somehow to be the key graph in ensuring a nice construction when $k$ divides $n$. Perhaps, then, this would be a good target for a counterexample to Tuza's conjecture: 
\begin{question}
If $H$ is as above, does $\frac{\operatorname{Sat}(H,n)}{n^{r-1}}$ tend to a limit as $n$ tends to infinity? \label{question:H}
\end{question}

In the case $r=2$, the answer to question \ref{question:H} is affirmative. 
Write  $n = mk + c$ where $0 \le c<k$ and let $G$ be the graph  on $n$ vertices consisting of $m-1$ disjoint $K_k$s and one $K_{k+c}$. This graph $G$ is certainly $H$-saturated and has $\frac{k-1}{2}n + O(1)$ edges. One can show that in a minimal $H$-saturated graph every vertex must have degree at least $k-1$. The argument is similar to one by Pikhurko (example 4, \cite{Pik04}); we omit the details. These together imply that $\operatorname{Sat}(H,n) = \left(\frac{k-1}{2} + o(1)\right)n$.

For $r>2$, however, a similar construction does not easily work. Let $G$ be a collection of disjoint $K_k^{(r)}$s. Any $r$-set not in $E(G)$ that meets some $K_k^{(r)}$ in two or more vertices could be added to $G$ without creating a copy of $H$. However, if we add several such edges to $G$ then we possibly create other copies of $K_k^{(r)}$ intersecting the original ones, making it difficult to be sure there are no copies of $H$. Thus it is not clear what a minimal saturated graph would look like even when $n$ is divisible by $k$.

In going from a forbidden family of size that grows with $r$ in Theorem \ref{thm:big family} to a family of constant size in Theorem \ref{thm:constant size family}, we lost the large gap in the asymptotics for $\sat$. That is, in the case when $n$ is divisible by $k$, the construction of a saturated graph with few edges went from having $\Theta(n)$ edges to having $\Theta\left(n^{r-2}\right)$ edges. Is it possible to retain the large difference in asymptotics and still decrease the size of the family? This seems difficult, especially if we try to reduce the family to a single graph.
\begin{question}
Let $F$ be an $r$-graph. Can $\operatorname{Sat}(F,n)$ be $O(n)$ for some infinite sequence of values of $n$ and $\Omega(n^{r-1})$ for some other infinite sequence? \label{question:gap}
\end{question}

If Tuza's conjecture is true, it would imply that the answer to question \ref{question:gap} is `no'. However, it may be easier to provide a negative answer to question \ref{question:gap} than to prove Tuza's conjecture directly, and an answer might help provide ideas towards a full proof.

\nocite{*}

\bibliographystyle{alpha}
\bibliography{bib} 

\begin{thebibliography}{FFS11}

\bibitem[Bol65]{Bol65}
B\'ela Bollob\'as.
\newblock On generalised graphs.
\newblock {\em Acta Math. Acad. Sci. Hung.}, 16:447--452, 1965.

\bibitem[Bol86]{Bol86}
B\'ela Bollob\'as.
\newblock {\em Combinatorics}.
\newblock Cambridge University Press, Cambridge, 1986.
\newblock Set systems, hypergraphs, families of vectors and combinatorial
  probability.

\bibitem[FFS11]{FFS11}
Jill~R. Faudree, Ralph~J. Faudree, and John~R. Schmitt.
\newblock A survey of minimum saturated graphs.
\newblock {\em Electron. J. Combin.}, 18, 2011.
\newblock Dynamic Survey 19.

\bibitem[KT86]{KT86}
L.~K\'aszonyi and Zs. Tuza.
\newblock Saturated graphs with minimal number of edges.
\newblock {\em J. Graph Theory}, 10(2):203--210, 1986.

\bibitem[Pik99]{Pik99}
Oleg Pikhurko.
\newblock The minimum size of saturated hypergraphs.
\newblock {\em Combin. Probab. Comput.}, 8(5):483--492, 1999.

\bibitem[Pik04]{Pik04}
Oleg Pikhurko.
\newblock Results and open problems on minimum saturated hypergraphs.
\newblock {\em Ars Combin.}, 72:111--127, 2004.

\bibitem[Tuz88]{Tuz88}
Zsolt Tuza.
\newblock Extremal problems on saturated graphs and hypergraphs.
\newblock {\em Ars Combin.}, 25(B):105--113, 1988.
\newblock Eleventh British Combinatorial Conference (London, 1987).

\end{thebibliography}

\end{document}